\newtheorem{lem}{Lemma}
\newtheorem{thm}{Theorem}
\newtheorem{cor}{Corollary}
\title{Nonexistence of Consecutive Powerful Triplets Around Cubes with Prime-Square Factors}
\author{Jialai She}
\date{}
\begin{document}

\maketitle

\begin{abstract}
The Erd\H{o}s-Mollin-Walsh  conjecture, asserting the nonexistence of three consecutive powerful integers, remains a celebrated open problem in number theory.  A natural line of inquiry, following recent work by Chan (2025), is to investigate potential counterexamples centered around perfect cubes, which are themselves powerful.  This paper establishes a new non-existence result for a family of such integer triplets with distinct structural constraints, combining techniques from modular arithmetic, $p$-adic valuation, Thue equations, and the theory of elliptic curves.

\end{abstract}

\section{Introduction}
A positive integer is called a \emph{powerful number} if each of its prime factors appears with an exponent of at least two. Every powerful number $n$ admits a  \emph{unique} representation  as
\begin{align}
n=a^2b^3,
\end{align} where $a, b\in \mathbb Z$  and $b$ is  square-free   (meaning  that it is not divisible by any perfect square other than 1) (see \cite{golomb1970powerful} for example).
The    Erd\H{o}s-Mollin-Walsh  conjecture \citep{erdHos1976problems,mollin1986powerful} asserts that no three consecutive integers are all powerful.

 A natural and  interesting case arises when  the triplet is centered at a perfect cube $x^3$, itself always powerful. That is, does the triplet $(x^3-1, x^3, x^3+1)$ ever consist entirely of powerful numbers?  Recently, \cite{Chan2025} resolved a  subcase by proving no such triplets exist when
\begin{align}
x^3 - 1 = p^3 y^2 , \ \  x^3 + 1 = q^3 z^2, \label{Chaneq}
\end{align}
for primes $p, q$ and   integers $x,y,z>0$.


In this paper, we prove the non-existence of a new family of triplets in this setting, extending Chan's result while  addressing distinct constraints.  Our main result is as follows.

\begin{thm}\label{thm:main}
  There exist no consecutive powerful numbers of the form  \[
    x^3 - 1 = p^2\,a^3, \qquad x^3,
    \qquad
    x^3 + 1 = q^2\,b^3.
  \]
  where   \(p,q\) are primes and  \(a,b,x\) are integers.
\end{thm}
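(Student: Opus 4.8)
The plan is first to force $x$ to be even, then to split on $x\bmod3$. If $x$ were odd, $x-1$ and $x+1$ would be consecutive even integers, so exactly one of them is $\equiv2\pmod4$; since $x^2\pm x+1$ are odd one gets $v_2(x^3-1)=v_2(x-1)$ and $v_2(x^3+1)=v_2(x+1)$, so one of $x^3\pm1$ has $2$-adic valuation exactly $1$, impossible for a powerful number. Hence $x$ is even (and $x\ge2$), so $x^3\pm1$ are odd and $p,q,a,b$ are odd. I then use the factorisations $x^3-1=(x-1)(x^2+x+1)$ and $x^3+1=(x+1)(x^2-x+1)$: each pair of factors has gcd dividing $3$, equal to $3$ exactly when $x\equiv1$, resp.\ $x\equiv2$, mod $3$. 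Whenever the two factors of $x^3-1=p^2a^3$ are coprime, the prime $p$ divides exactly one of them, the other is a perfect cube (all its prime exponents being multiples of $3$), and the $p$-containing factor is $p^2$ times a cube; likewise for $x^3+1=q^2b^3$.

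For $x\equiv0\pmod3$ both factorisations are coprime, so either $x^2+x+1$ or $x^2-x+1$ is a perfect cube, or else both $x-1$ and $x+1$ are. In the first case the classical determination of the integral points on $X^2+X+1=Y^3$ (the elliptic-curve ingredient; solutions $X\in\{-1,0,18,-19\}$) together with $x$ even and $x\ge2$ gives $x=18$, whereupon $x-1=17$ cannot be $p^2$ times a cube; in the second case $X^2-X+1=Y^3$ (solutions $X\in\{0,1,19,-18\}$) has no even solution with $X\ge2$; and in the third case $x-1$ and $x+1$ are two cubes differing by $2$, so $(r-u)(r^2+ru+u^2)=2$ forces $x=0$. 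Each is a contradiction.

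The cases $x\equiv1$ and $x\equiv2\pmod3$ are the heart of the matter; take $x\equiv1$. Here $x^3+1=q^2b^3$ is a coprime factorisation, and ruling out ``$x^2-x+1$ is a cube'' as above forces $x+1=u^3$ with $u$ odd and $u\equiv2\pmod3$. Consequently $x\equiv7\pmod9$, so $v_3(x-1)=1$ and, by lifting the exponent, $v_3(x^3-1)=2$; since $x^3-1=p^2a^3$, this forces $2v_3(p)+3v_3(a)=2$, i.e.\ $p=3$ and $3\nmid a$. Dividing $x^3-1=(x-1)(x^2+x+1)=9a^3$ by $9$ exhibits a product of two coprime positive integers equal to a cube, so $\tfrac{x-1}{3}=\sigma^3$ and $\tfrac{x^2+x+1}{3}=\tau^3$; substituting $x=u^3-1$ this becomes $u^3-2=3\sigma^3$ and $u^6-u^3+1=3\tau^3$, and eliminating $u$ gives $\tau^3=3\sigma^6+3\sigma^3+1=(\sigma^3+1)^3-(\sigma^3)^3$, that is,
\[
\tau^3+(\sigma^3)^3=(\sigma^3+1)^3 .
\]
By Fermat's Last Theorem for exponent $3$, this is impossible unless $\tau\,\sigma^3\,(\sigma^3+1)=0$, and each option fails for $x\ge2$ (for instance $\sigma=-1$ would give $x=-2$). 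The case $x\equiv2\pmod3$ runs in parallel: one obtains $x-1=u^3$, then $q=3$, and finally $\tau^3+(\sigma^3-1)^3=(\sigma^3)^3$, whose only non-excluded root is the degenerate value $x=2$, for which $x^3-1=7$ is not of the required form.

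The single non-elementary ingredient is the complete list of integral solutions of $X^2+X+1=Y^3$, which I expect to be quoted rather than reproved; this is the role of the theory of elliptic curves. The real difficulty lies in the bookkeeping: justifying each coprimality and cube-extraction step, pinning down the relevant $3$-adic valuations so that $p$ (or $q$) is forced to equal $3$, and recognising that the resulting pair of equations collapses to a Fermat cubic, after which FLT for exponent $3$ closes the case.
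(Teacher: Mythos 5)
Your proof is correct, and it takes a genuinely different route from the paper's in the two hard cases. The paper works directly with the residue of $x$ mod $3$, applies its splitting lemma to both factorizations, and in the cases $x\not\equiv 0\pmod 3$ splits on whether $q=3$ or $q\neq 3$, finishing either with the Mordell-type equation $u^2\mp u+1=3v^3$ (reduced to $y^2=x^3-432$) or with a mod $9$ obstruction against $x\mp1$ being a cube. You instead (a) first force $x$ to be even by a $2$-adic valuation argument --- a reduction the paper does not make, and which lets you discard most of the sporadic Mordell-curve solutions by parity rather than case-by-case checks --- and (b) in the cases $x\not\equiv 0\pmod 3$ you first extract $x\pm1=u^3$ from the coprime factorization, deduce $x\equiv\mp1\pmod 9$ and hence $p=3$ (resp.\ $q=3$) from the lifting-the-exponent computation, and then collapse the two resulting coprime cube conditions to the Fermat cubic $A^3+B^3=C^3$ via the substitution $x=u^3\mp1$. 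This is an appealing reorganization: the curve $y^2=x^3-432$ cited by the paper is precisely the Weierstrass model of the Fermat cubic, so the two arguments lean on the same underlying fact, but your route through FLT for exponent $3$ is more classical and avoids quoting the integral-point computation for that second curve (you still need the integral points on $X^2+X+1=Y^3$, as does the paper). One small point to repair: you assert ``$x\ge 2$'' without justification, and the theorem explicitly allows negative $x$; this is fixed in one line by noting that $x\mapsto -x$ carries a solution $(x,p,a,q,b)$ to the solution $(-x,q,-b,p,-a)$, so one may assume $x\ge 0$, and $x=0$ fails outright. All the later steps (positivity of the factors in the cube-extraction, and the exclusion of $\sigma=0$, $\sigma=-1$, $x=-2$, $x=-1$) depend on this normalization, so it should be stated.
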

Notice that the powers in Theorem 1 differ from those  in \eqref{Chaneq}.
Furthermore, unlike \cite{Chan2025}, we do not need to impose the restriction that  variables $x, a,   b$ are positive.
This result establishes the non-existence of a class of consecutive powerful triplets not covered by previous literature.

\begin{cor}\label{mycor}
For any primes $p, q$ and any integers
  $x, a$ with $a \ne 0$, the equation $ x ^6 - 1 =p^ 2 q^2 a^3$ has no solution. 
\end{cor}

\section{Proof of the Main Result}
First, let us introduce some lemmas.
\begin{lem}\label{split} Let \(p\) be a prime and \(R,S,C \) be integers that satisfy
\[
R\,S  =  p^2\,C^3,
\]
and set \(g=\gcd(R,S)\).
If \(g=1\), then  one of \(R,S\) is a perfect cube and the other is \(p^2\) times a perfect cube. If  \(g\) is a prime, then there exist $C_1, C_2\in \mathbb Z$ such  that either  $(R, S)=(g C_1^3, g C_2^3)$, $\{R, S\}=\{g C_1^3, g^2 p^2 C_2^3\}$,  or $\{R, S\}=\{g p^2 C_1^3, g^2   C_2^3\}$.
\end{lem}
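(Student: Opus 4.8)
The plan is to argue purely through unique factorization in $\mathbb Z$, tracking prime valuations modulo $3$. The one elementary fact I will invoke repeatedly is that if $u,v\in\mathbb Z$ are coprime and $uv$ is a perfect cube, then each of $u,v$ is itself a perfect cube: for every prime $\ell$ one of $v_\ell(u),v_\ell(v)$ is $0$ while $v_\ell(u)+v_\ell(v)\equiv 0\pmod 3$, and signs cause no difficulty since $(-1)^3=-1$. I will first dispose of the degenerate case $RS=0$, which forces $C=0$ and $g=|R|$ or $|S|$, so that the claim is immediate and all valuations appearing below are finite.

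For $g=1$: I would write $C=p^{c}C'$ with $p\nmid C'$, so that $RS=p^{2+3c}(C')^{3}$. Since $\gcd(R,S)=1$ and $p\mid RS$, the prime $p$ divides exactly one of $R,S$; say $p\mid R$. Then $v_p(R)=2+3c$ and $v_p(S)=0$, and dividing gives $(R/p^{2+3c})\cdot S=(C')^{3}$ with coprime factors, so each is a perfect cube. Hence $S$ is a perfect cube and $R=p^{2}\cdot(p^{c})^{3}\cdot(R/p^{2+3c})$ is $p^{2}$ times a perfect cube (and symmetrically if $p\mid S$).

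For $g$ equal to a prime $\ell_0$: I would set $R=\ell_0 R_1$, $S=\ell_0 S_1$, so that $\gcd(R_1,S_1)=1$ and $\ell_0^{2}R_1S_1=p^{2}C^{3}$. If $\ell_0=p$ this reduces to $R_1S_1=C^{3}$ with coprime factors, so $R_1,S_1$ are both cubes and $(R,S)=(gC_1^{3},gC_2^{3})$. If $\ell_0\neq p$, then $\ell_0^{2}\mid C^{3}$ forces $v_{\ell_0}(C)=d\ge 1$; writing $C=\ell_0^{d}C'$ with $\ell_0\nmid C'$ yields $R_1S_1=p^{2}\ell_0^{\,3d-2}(C')^{3}$, where $3d-2\equiv 1\pmod 3$ and $3d-2\ge 1$. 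Coprimality of $R_1,S_1$ means each of $p$ and $\ell_0$ divides exactly one of them, giving four placements of the factors $p^{2}$ and $\ell_0^{\,3d-2}$; in each, the complementary factor is a perfect cube, and restoring the outer $\ell_0$ makes the $\ell_0$-exponents of $R$ and $S$ equal to $3d-1\equiv 2$ and $1$ in some order modulo $3$. Absorbing $\ell_0^{\,3d-3}=(\ell_0^{\,d-1})^{3}$ into a cube and reading off which of $R,S$ carries the surviving $p^{2}$, one lands in $\{R,S\}=\{gC_1^{3},g^{2}p^{2}C_2^{3}\}$ or $\{R,S\}=\{gp^{2}C_1^{3},g^{2}C_2^{3}\}$.

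The only step that requires care is this last one: I expect the main obstacle to be the bookkeeping needed to confirm that the four placements of $p$ and $\ell_0$ really collapse onto just the two advertised normal forms — two of them being mirror images of the others under $R\leftrightarrow S$ — together with the small congruence checks ($3d-2\equiv 1$ and $3d-1\equiv 2\pmod 3$) that let one absorb the appropriate power of $\ell_0$ into a cube and leave behind precisely $\ell_0$ or $\ell_0^{2}$. Everything else is a direct valuation count.
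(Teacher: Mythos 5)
Your argument is correct and is essentially the same unique-factorization/valuation-distribution proof that the paper gives in much terser form: coprimality forces each prime power in $p^2C^3$ to land entirely in one factor, and one then reads off the exponents modulo $3$. You simply carry out explicitly the bookkeeping the paper leaves implicit (the degenerate case $RS=0$, the exact $p$- and $\ell_0$-valuations, and the collapse of the four placements onto the two advertised normal forms), and all of those details check out.
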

\begin{proof}
For $g=1$, if   $p \nmid C$,
any prime factor $r\neq p$ of $C$ divides exactly one of $R,S$, and if $p \mid C$, then all $p$ factors in $p^2 C^3$ must be contained entirely within either $R$ or $C$, but not both.  Assume  $g$ is a prime.  If $g=p$, then both $R/p$ and $S/p$ are perfect cubes. Otherwise, the two  factors   can be written as  $g C_1^3$ and $g^2 p^2 C_2^3$, or $g p^2 C_1^3$ and $g^2   C_2^3$, up to ordering. The conclusion follows.
\end{proof}

\begin{lem}\label{lem:modell}
The Diophantine equation
$ 
u^2 + u + 1 = 3v^3
$ 
 has exactly two integer solutions  $(u,v) \in \{ (-2,1), (1,1) \}$. The Diophantine equation
$ 
u^2 - u + 1 = 3 v^3 \label{modell-ex1}
$ 
 has exactly two integer solutions  $(u,v) \in \{ (2,1), (-1,1) \}$. \end{lem}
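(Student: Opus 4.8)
The plan is to reduce both equations to Fermat's equation for exponent three. Since the substitution $u\mapsto -u$ carries $u^2+u+1=3v^3$ to $u^2-u+1=3v^3$, it suffices to find all integer solutions of the first equation; the solution set of the second is then obtained by negating the $u$-coordinate. The first step is a congruence observation: $u^2+u+1$ is divisible by $3$ precisely when $u\equiv 1\pmod 3$, and since the right-hand side $3v^3$ is always divisible by $3$, every solution must satisfy $u\equiv 1\pmod 3$. Writing $u=3t+1$ and dividing through by $3$, the equation becomes $v^3=3t^2+3t+1$; this substitution is a bijection on the relevant residue class, so no solutions are lost.

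The key algebraic step is the identity $3t^2+3t+1=(t+1)^3-t^3$, which rewrites the equation as $t^3+v^3=(t+1)^3$. I would then invoke Euler's theorem, i.e.\ the exponent-$3$ case of Fermat's Last Theorem: the only integer solutions of $X^3+Y^3=Z^3$ are those with $XYZ=0$. Here $v=0$ would force $t^3=(t+1)^3$, which is impossible, so one of $t$ and $t+1$ must vanish; the cases $t=0$ and $t=-1$ each give $v^3=1$, hence $v=1$, and back-substituting $u=3t+1$ yields exactly $u=1$ and $u=-2$. Applying $u\mapsto -u$ then gives $u=-1$ and $u=2$ for the second equation. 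Because the invoked classification of solutions of $X^3+Y^3=Z^3$ is \emph{exhaustive}, these are all the solutions, which is precisely the claim.

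I expect no serious obstacle here, since the lemma is in essence just the classical (and elementary) exponent-$3$ case of Fermat's Last Theorem, which may be quoted directly. Alternatively — and this is presumably the elliptic-curve route suggested by the name of the lemma — one can clear denominators in $v^3=3t^2+3t+1$ to land on the Mordell curve $Y^2=X^3-432$, explicitly via $Y=36(2t+1)$ and $X=12v$, whose only integral points are $(12,\pm 36)$; this is a standard fact and is the elliptic-curve incarnation of the same result, after which one simply transports $(X,Y)=(12,\pm 36)$ back through the change of variables. The only care required in either route is bookkeeping: verifying that the congruence $u\equiv 1\pmod 3$ really is forced (so the substitution discards nothing), and correctly carrying the two base solutions through $u=3t+1$ and through the symmetry $u\mapsto -u$ to both equations.
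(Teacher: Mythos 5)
Your proof is correct, but it takes a genuinely different route from the paper. The paper transforms $u^2-u+1=3v^3$ directly into the Mordell curve $y^2=x^3-432$ (via multiplying by $9$, completing the square, and rescaling) and then quotes the determination of its integral points $(12,\pm 36)$ from the literature on Mordell equations. You instead first force the congruence $u\equiv 1\pmod 3$ (correctly: $3\mid u^2+u+1$ only in that residue class), substitute $u=3t+1$ to get $v^3=3t^2+3t+1$, and then use the identity $3t^2+3t+1=(t+1)^3-t^3$ to land on $t^3+v^3=(t+1)^3$, which is settled by Euler's exponent-$3$ case of Fermat's Last Theorem; the degenerate cases $t=0$ and $t=-1$ give exactly the claimed solutions, and the symmetry $u\mapsto -u$ transports them to the companion equation just as the paper does. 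The two routes are two faces of the same object (the Fermat cubic is birationally the curve $y^2=x^3-432$), but they lean on different citations: the paper needs the full integral-point computation for $y^2=x^3-432$, while your argument needs only the classical and elementary FLT$_3$, making it arguably more self-contained; the small price is the extra bookkeeping of the mod-$3$ reduction, which you handle correctly and which guarantees no solutions are discarded. Your parenthetical Mordell-curve alternative, with $Y=36(2t+1)$ and $X=12v$, also checks out and recovers the paper's endpoint.
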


\begin{proof}
The core of the proof is to convert the original equations into the standard form of a Mordell curve through proper transformations.
Taking  the second equation 
as an example. We first multiply the equation by $3^2$ and substitute $u' = 3u$ and $v' = 3v$, which yields
$
(u')^2 - 3u' + 9 = (v')^3.
$
Multiplying by 4 and completing the square on the left side allows for the substitution $u'' = 2u' - 3$ and $v'' = v'$, leading to
$
(u'')^2 + 27 = 4 (v'')^3.
$
Finally, multiplying both sides by $2^4$ and letting $x = 4v''$ and $y = 4u''$ reduces the equation to the canonical Mordell form:
$
y^2 = x^3 - 432.
$
The integer solutions to this well-known curve are $(x,y) = (12, \pm 36)$ (see \cite{gebel1998mordell} for example). Working backwards   gives the two solutions for $(u, v)$. The other equation can be handled by substituting    $u \mapsto - u$.
\end{proof}
\begin{lem}\label{cubic-mordell}
The integer solutions for $u$ to the equation
$ 
u^2 + u + 1 = v^3
$ 
are given by \(u = -19, -1, 0, 18\).
Similarly, the integer solutions for $u$ to
$ 
u^2 - u + 1 = v^3
$ 
are \( -18,  0,  1,  19\).\end{lem}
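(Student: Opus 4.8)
The plan is to follow the template of Lemma~\ref{lem:modell}: transform $u^2 + u + 1 = v^3$ into a Mordell curve and then read off the integral points from the literature. Starting from $u^2+u+1 = v^3$, multiply by $4$ and complete the square to obtain $(2u+1)^2 + 3 = 4v^3$; then multiply through by $16$ and set $x = 4v$, $y = 4(2u+1)$, which yields the canonical form
\[
y^2 = x^3 - 48 .
\]
Every integer solution $(u,v)$ of the original equation gives an integer point $(x,y)$ on this curve (automatically with $4\mid x$ and $y\equiv 4\pmod 8$, though we will not need these congruences), and the correspondence is reversible via $v = x/4$, $u = (y-4)/8$.

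Next I would invoke the known complete list of integral points on $y^2 = x^3 - 48$ \citep{gebel1998mordell}, namely $(x,y)\in\{(4,\pm 4),\,(28,\pm 148)\}$. Back-substituting: $(4,4)\mapsto (u,v)=(0,1)$, $(4,-4)\mapsto(-1,1)$, $(28,148)\mapsto(18,7)$, and $(28,-148)\mapsto(-19,7)$, so the solutions in $u$ are exactly $-19,\,-1,\,0,\,18$. For the companion equation, the substitution $u\mapsto -u$ carries $u^2 - u + 1 = v^3$ onto $u^2 + u + 1 = v^3$, so its integer solutions in $u$ are the negatives of the previous list, i.e.\ $-18,\,0,\,1,\,19$.

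The only substantive input is the determination of all integral points on $y^2 = x^3-48$, which I would cite rather than reprove; a self-contained alternative would be a descent in $\mathbb{Z}[\omega]$ (with $\omega$ a primitive cube root of unity), using the factorization $u^2+u+1=(u-\omega)(u-\omega^2)$ and treating the cases $3\mid v$ and $3\nmid v$ separately, but this merely reconstructs what the cited tables already provide. I expect the reduction steps and the finite back-substitution to be entirely routine; the main point requiring care is simply trusting (or independently re-deriving) the integral-point data for this single Mordell curve, after which the lemma follows immediately.
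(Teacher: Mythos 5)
Your proposal is correct and follows essentially the same route as the paper: the paper's proof handles $u^2+u+1=v^3$ "using the transformations and Mordell curve techniques in Lemma~\ref{lem:modell}" (which, carried out as you do, lead precisely to $y^2=x^3-48$ and its integral points from \cite{gebel1998mordell}), with a direct citation of \cite{TZANAKIS1984} offered as an alternative, and then gets the second equation via $u\mapsto -u$. You have simply written out the details the paper leaves implicit.
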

\begin{proof}
The first equation can be handled using the transformations and Mordell curve techniques in Lemma \ref{lem:modell}, or directly by citing the corollary of \cite{TZANAKIS1984}. The second equation then follows by the substitution $u \mapsto - u$.
\end{proof}


\begin{lem}\label{lem:gcd}
For any integer \(x\),
$ 
  \gcd\bigl(x-1, x^2+x+1\bigr)
   = \gcd(x-1,3)$ and $
  \gcd\bigl(x+1, x^2-x+1\bigr)
   = \gcd(x+1,3).
$ 
\end{lem}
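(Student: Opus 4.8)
The plan is to reduce each gcd to a one-line polynomial-division computation. For the first identity, divide $x^2+x+1$ by $x-1$: one checks the identity $x^2+x+1 = (x-1)(x+2) + 3$, so $x^2+x+1 \equiv 3 \pmod{x-1}$. Invoking the elementary property $\gcd(m,n) = \gcd(m,\, n - qm)$ for any integer $q$, this immediately gives $\gcd(x-1,\, x^2+x+1) = \gcd(x-1,\, 3)$.

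For the second identity the symmetric division gives $x^2-x+1 = (x+1)(x-2) + 3$, hence $x^2-x+1 \equiv 3 \pmod{x+1}$ and $\gcd(x+1,\, x^2-x+1) = \gcd(x+1,\, 3)$. Alternatively — and this is the quickest route — apply the substitution $x \mapsto -x$ to the first identity: it sends $x-1 \mapsto -(x+1)$ and $x^2+x+1 \mapsto x^2-x+1$, and since $\gcd$ is insensitive to sign, the second statement drops out of the first with no extra work.

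The argument is entirely elementary, so there is no genuine obstacle to overcome; the only minor point to keep in mind is the degenerate case $x=1$ (resp. $x=-1$), where the first argument of the gcd vanishes and one uses the convention $\gcd(0,n)=|n|$ — here both sides equal $3$, consistent with the formula. It is also worth recording for later use that, since $3$ is prime, each of these gcd's takes only the values $1$ or $3$, equaling $3$ precisely when $3 \mid x-1$ (resp. $3 \mid x+1$); this dichotomy is exactly what makes the lemma convenient when it is fed into the case analysis underlying Theorem~\ref{thm:main}.
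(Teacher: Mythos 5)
Your proof is correct and uses exactly the same decompositions as the paper, namely $x^2+x+1=(x-1)(x+2)+3$ and $x^2-x+1=(x+1)(x-2)+3$ followed by the Euclidean reduction of the gcd. The extra remarks on the $x\mapsto -x$ symmetry and the degenerate case $x=\pm1$ are fine but not needed.
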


\begin{proof}
  Writing
  $
    x^2+x+1
    = (x-1)(x+2) + 3$,
 $   x^2 - x + 1
    = (x+1)(x-2) + 3.
  $
   The Euclidean  algorithm on polynomials yields the conclusion.  \end{proof}

\begin{lem}\label{lem:cube-diff}
  The Diophantine equation
$
    u^3 - v^3 = 2
$ 
  has the unique integer pair solution $(1,-1),$ and the Diophantine equation
$ 
    u^3 - v^3 = 1
$ 
  has the  integer solutions $(1,0)$ and $(0,-1)$.
\end{lem}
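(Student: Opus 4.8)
The plan is to exploit the factorization $u^3-v^3=(u-v)(u^2+uv+v^2)$ together with the elementary identity $u^2+uv+v^2=\bigl(u+\tfrac{v}{2}\bigr)^2+\tfrac{3}{4}v^2\ge 0$, which vanishes only at $u=v=0$. Since in both equations the product $(u-v)(u^2+uv+v^2)$ equals $1$ or $2$, it is nonzero, so the quadratic factor is strictly positive; hence $u-v>0$ as well, and each equation reduces to a short, explicit check over the positive divisor pairs of the right-hand side.

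For $u^3-v^3=1$ the only possibility is $u-v=1$ and $u^2+uv+v^2=1$. Substituting $u=v+1$ into the second relation gives $3v^2+3v+1=1$, i.e. $3v(v+1)=0$, so $v\in\{0,-1\}$, yielding exactly $(u,v)\in\{(1,0),(0,-1)\}$. (Alternatively one may note that $u^3+(-v)^3=1$ is a case of Fermat's equation for exponent three, but the direct computation is self-contained.) For $u^3-v^3=2$ there are two splittings, $(u-v,\,u^2+uv+v^2)=(1,2)$ or $(2,1)$. In the first, $u=v+1$ gives $3v^2+3v-1=0$, whose discriminant $21$ is not a perfect square, so there is no integer solution. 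In the second, $u=v+2$ gives $3v^2+6v+4=1$, i.e. $(v+1)^2=0$, so $v=-1$, $u=1$, the unique solution $(1,-1)$.

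There is essentially no deep obstacle: the whole argument is a finite divisor enumeration once the sign of the quadratic factor is pinned down. The only place demanding any care is checking the leftover quadratic in $v$ in each branch — confirming it has non-square discriminant (the $(1,2)$ branch of the second equation) or a repeated root (the $(2,1)$ branch) rather than two unwanted integer roots — which is immediate.
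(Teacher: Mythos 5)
Your proof is correct and takes essentially the same route as the paper's: factor $u^3-v^3=(u-v)(u^2+uv+v^2)$, observe that the quadratic factor is positive so $u-v>0$, and enumerate the divisor splittings of $1$ and $2$. You simply carry out the resulting quadratic checks more explicitly than the paper, which leaves them as a ``simple calculation.''
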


\begin{proof}
    We may assume $u,v$ are both nonzero.
    Consider $u^3 - v^3 = 2$. Since  $u > v$, $u - v> 0, u^2 + uv + v^2 > 0, $  either $u - v = 2, u^2 + uv + v^2 = 1,$ or $u - v = 1, u^2 + uv + v^2 = 2.$
    Simple calculation yields the unique solution $(1, -1)$.
The other equation can be treated similarly.
\end{proof}



We are now prepared to establish the main result.
Suppose for contradiction that
\begin{align}
  x^3 - 1 = p^2\,a^3,
  \quad
  x^3 + 1 = q^2\,b^3, \label{contrass}
\end{align}
with integers \(x,a,b \)  and  primes \(p,q\).
Set
$ 
g_{-}=\gcd(x-1,x^{2}+x+1), 
$ $
g_{+}=\gcd(x+1,x^{2}-x+1).
$ 
By Lemma \ref{lem:gcd},
\[
g_{-}=
\begin{cases}
3 & x\equiv1\pmod3,\\
1 & \text{otherwise,}
\end{cases}\qquad
g_{+}=
\begin{cases}
3 & x\equiv2\pmod3,\\
1 & \text{otherwise.}
\end{cases}
\]
We proceed by casework since  the pair $(g_{-},g_{+})$ can be $(1,1)$, $(1,3)$, or $(3,1)$ only.
\\

\noindent\textbf{Case 1: $(g_{-},g_{+}) = (1,1)$, $x \equiv 0 \pmod{3}$.}
By Lemma \ref{split}, we have the following possibilities
\begin{align*}
\begin{split}
  &\text{(i)}\quad x-1 = u^3, x^2+x+1 = p^2 v^3,\\
  &\text{(ii)}\quad x-1 = p^2 u^3, x^2+x+1 = v^3,
\end{split} \quad \text{ and}
\begin{split}
  & \text{(a)}\quad x+1 = s^3,  x^2-x+1 = q^2 t^3,\\
  &\text{(b)}\quad x+1 = q^2 s^3, x^2-x+1 = t^3,
\end{split}
  \end{align*}
where $u,v,s,t$ are  integers. We explore each subcase below.

\begin{itemize}
\item \((i)+(a).\)
We obtain $x-1 = u^3,  x+1 = s^3
  $, or
  $s^3 - u^3 = 2.$
By Lemma \ref{lem:cube-diff}, $s = 1, u = -1.$ But then $x = 0$ and   there exists no prime $p$ satisfying \eqref{contrass}.
\item $(i)+(b).$
Applying Lemma~\ref{cubic-mordell} to   \(x^2 - x + 1 = t^3\) yields \(x = -18,  0,  1,  19\). However, none of these values satisfies  \(x + 1 = q^2 s^3\) under the given constraints (for example, taking $x=19 $ leads to $ q^2 s^3 = 20$ and thus $ q = 2$, but no integer $s$ exists).

\item  $(ii)+(a).$
This subcase can be treated similarly by applying Lemma~\ref{cubic-mordell} to the equation \(x^2 + x + 1 = v^3\).

\item  $(ii)+(b).$
In this subcase, $x^2 - x + 1$ and $ x^2 + x + 1$ are perfect cubes. Lemma  \ref{cubic-mordell}  forces $x=0$, but
 then no prime $q$ exists satisfying \eqref{contrass}.

\end{itemize}

\noindent\textbf{Case 2: $(g_{-},g_{+}) = (1,3)$, $x\equiv2 \pmod{3}$.}
Let $v_p (n)$ denote the $p$-adic valuation of $n.$

First, applying the Lifting-the-Exponent lemma (see, e.g., Theorem 1.37 of \cite{Kaya2023}) to $v_3(x^3+1)$ gives $v_3 (x + 1) + 1 = v_3 (x^3 + 1) = v_3 (x + 1) + v_3(x^2 - x + 1),$ or
\begin{align}
v_3 (x^2 - x + 1) = 1. \label{factor3}
\end{align}

We split two subcases based on $q$.
\begin{itemize}
    \item $q = 3:$ For  $r \neq 3$   as  an arbitrary prime factor of $x^2 -  x + 1,$  since $g_+ = 3$, we have $v_r (x^2 - x + 1) = v_r ((x + 1)(x^2 - x + 1)) = v_r (9b^3) = 3 v_r (b)$.   Combined with \eqref{factor3},   $x^2 - x + 1$ can be written as $3 \prod_{i = 1} ^{n} r_i ^{3\alpha_i},$ or $3s^3$ for some $s\in \mathbb Z$. Applying Lemma \ref{lem:modell} gives $x = -1$ or $2.$ Yet neither yields a valid solution for \eqref{contrass} with prime $p.$

    \item $q \neq 3:$ Here, we have   $v_3 (x^3 + 1) = v_3 (q^2 b^3)=3  v_3 (b)$. Combined with    \eqref{factor3},     $v_3 (x^3 + 1) \ge  3$, from which it follows that  $v_3 (x+1) = v_3 ((x^3 - 1)/(x^2 - x + 1)) \ge  3 - 1 = 2$, and thus
%
%
%
\begin{align}x \equiv -1 \pmod{9}. \label{xmod}
\end{align}

    Next, applying Lemma $\ref{split}$ to $(x-1)(x^2 + x + 1) = p^2 a^3$, we have   two possibilities: (i) $x - 1 = u^3, \, x^2 + x + 1 = p^2 v^3,$ (ii) $x - 1 = p^2 u^3, \, x^2 + x + 1 = v^3.$  Combining   (i) and   \eqref{xmod} yields $ u^3 \equiv - 2 \pmod 9$, which  is impossible. For (ii),     \eqref{xmod}    and Lemma $\ref{cubic-mordell}$  force $x = -19$ or $-1.$ Neither yields a valid solution for \eqref{contrass}  with prime $p.$

\end{itemize}

\noindent\textbf{Case 3: $(g_{-},g_{+}) = (3,1)$, $x\equiv1 \pmod{3}$.}
This case is analogous to the previous one, with $x$ replaced by $-x$. The result  follows from a combination of $p$-adic analysis and Lemmas \ref{split}, \ref{lem:modell}, and \ref{cubic-mordell}.
\\

\section{Proof of the Corollary}
The proof of Corollary \ref{mycor} combines the arguments for our main theorem with that  of Corollary 1 of \cite{Chan2025}. We begin with a few preparatory lemmas.

\begin{lem}\label{lem:corrv3}
If \(3\mid x-1\), then \(v_3(x^2+x+1)=1\); if \(3\mid x+1\), then \(v_3(x^2-x+1)=1\).
\end{lem}
\begin{proof}
The argument is analogous to the one used in the main theorem's proof.
\end{proof}

\begin{lem}\label{lem:cube-2diff}
      The only   integer solutions of the Diophantine equation
$
    u^3 - 2v^3 = 1
$ 
are $(1,0)$ and $(-1,-1)$.
\end{lem}

\begin{proof}
Both \((1,0)\) and \((-1,-1)\) satisfy \(u^3-2v^3=1\). By the Delone-Nagell theorem (see, e.g., Theorem V, \S72 of \cite{DeloneFaddeev1964}), there is at most one solution in addition to \((1,0)\). The conclusion follows.
\end{proof}

Although the following result is likely known, we were unable to locate a specific reference for these parameters. For the sake of completeness, we provide a brief, self-contained proof based on Lemma~\ref{lem:modell}.

\begin{lem}\label{lem:cube-4diff}
For \(d\in\{4,18,36\}\), the Diophantine equation \(u^{3}-d v^{3}=1\) has the unique integer solution \((1,0)\).

\end{lem}

\begin{proof}
First, for \(d=4\), we consider the equation \(u^3 - 4v^3 = 1\). Reducing this modulo 9 implies that   \(u^3 \equiv 1 \pmod 9\) and \(v^3 \equiv 0 \pmod 9\), and so  \(u \equiv 1 \pmod 3\) and  \(v \equiv 0 \pmod 3\).
Let \(v=3v_0\) for some integer \(v_0\). The equation becomes
$  (u-1)(u^2+u+1) = 4 \cdot 3^3 \cdot v_0^3$.
  Lemma~\ref{lem:corrv3} gives \(v_3(u^2+u+1)=1\), from which it follows that  \(\gcd(u-1, u^2+u+1)=3\). We can therefore set \(u-1=3a\) and \(u^2+u+1=3b\) with $\gcd(a, b) =1$, and obtain  \(ab = 12v_0^3\).
Since \(u^2+u+1\) is odd,     \(b\) must also be odd. As \(v_3(3b)=1\), $3\nmid b$.  It follows that  \(b=s^3\), and   \(u^2+u+1 = 3s^3\). By Lemma~\ref{lem:modell},   \(u=1,-2\). Noting \(u\) must be odd, the conclusion follows.

 Similarly, for $d=18$, the equation $u^3-18v^3=1$ taken modulo 9 gives $u^3\equiv 1 \pmod 9$, which implies $u\equiv 1 \pmod 3$.
By Lemma~\ref{lem:corrv3}, \(v_3(u^2+u+1)=1\), and thus    \(\gcd(u-1, u^2+u+1)=3\). Setting $u-1=3a$ and $u^2+u+1=3b$ with $\gcd(a,b)=1$ results in $ab=2v^3$. Since $b$ must be odd and is coprime to $a$,   $b=s^3$, which yields $u^2+u+1 = 3s^3$. The unique integer solution $(1,0)$ again follows by Lemma~\ref{lem:modell}. The case for $d=36 $ follows from an identical argument, applying a modulo 9 reduction along with Lemma~\ref{lem:corrv3} and Lemma~\ref{lem:modell}.    \end{proof}

%
%
%

We now prove Corollary \ref{mycor} by contradiction. 
The equation in the corollary can be written as
\begin{align} \big( x^3 - 1 \big) \big( x^3 + 1 \big) = p^2 q^2 a^3. \label{x6eqinpf}
\end{align}
 Since   $\gcd(x^3 - 1, x^3 + 1) \mid   x^3 + 1 - (x^3 - 1)=2$, $\gcd(x^3 - 1, x^3 + 1)=1$ or $2$.

For $\gcd(x^3 - 1, x^3 + 1) = 1$,  there are  two possibilities for the  factors on the left-hand side of \eqref{x6eqinpf}. Assume  $p^2 q^2$ divides one of the factors.
This implies that the other factor must be a perfect cube.
If   $ x^3 + 1 = a_1^3$ or      $x^3 - 1 = a_1^3$, $x$ has no valid solution   by Lemma \ref{lem:cube-diff}.
Therefore,
 $p^2$ divides one factor and $q^2$ divides the other;
without loss of generality, assume
$
     x^3 - 1 = p^2 a_1^3 ,
    x^3 + 1 = q^2 a_2^3
$.
But this  contradicts our main theorem.

Next, consider    $\gcd(x^3 - 1, x^3 + 1) = 2$. If one of the primes is 2, say   $p=2$, then we have two possible systems of equations:  \begin{align}
\begin{cases}
x^3-1 = 2 q^2 u^3\\
x^3+1 = 2 v^3,
\end{cases}
\text { or }
\begin{cases}
x^3-1 = 2   u^3\\
x^3+1 = 2 q^2 v^3.
\end{cases}
\end{align}
 By Lemma \ref{lem:cube-2diff}, $x = \pm 1$, which violates $a \ne 0$.

For the remainder of the proof,  assume     $p \ne 2, q \ne 2$, and    \eqref{x6eqinpf} becomes  $(x^3-1)(x^3+1) = 8 p^2 q^2 b^3$. One possibility is  that    $p^2 q^2$ divides one of the factors on the left.  This leads to four  subcases, each of which yields a contradiction by applying either Lemma \ref{lem:cube-2diff} or Lemma \ref{lem:cube-4diff}:
\begin{itemize}
\item $x^3 - 1 = 2 p^2 q^2 u^3,
x^3 + 1 = 4 v^3$. By Lemma \ref{lem:cube-4diff},   $x=-1$, violating $a\ne 0$.

\item $x^3 - 1 = 4 p^2 q^2 u^3,
x^3 + 1 = 2   v^3
$. By Lemma \ref{lem:cube-2diff}, $x = \pm 1$,  violating $a\ne 0$.
\item $x^3 - 1 = 2 u^3,
x^3 + 1 = 4   p^2 q^2 v^3
$. By Lemma \ref{lem:cube-2diff}, $x = \pm 1$,  violating $a\ne 0$.
\item $x^3 - 1 = 4 u^3,
x^3 + 1 = 2   p^2 q^2 v^3$. By Lemma \ref{lem:cube-4diff},   $x=1$, violating $a\ne 0$.
\end{itemize}
Therefore,
 $p^2$  must divide one of $x^3-1$ and $x^3+1$, and $q^2$ must divide the other.

It remains to  study the system \begin{align}
x^3 - 1 = 2 p^2 u^3, \quad
x^3 + 1 = 4   q^2 v^3. \label{subcaseeqcor}
\end{align}
  Indeed, when  the factors of 2 and 4 are swapped,    \(x^{3}-1=4p^{2}u^{3}\) and \(x^{3}+1=2q^{2}v^{3}\) reduce to the form in \eqref{subcaseeqcor} under the change of variables   \(x\mapsto -x,  u\mapsto -v,  v\mapsto -u,  p\mapsto q,   q\mapsto  p\). The equations in  \eqref{subcaseeqcor} resemble those in \eqref{contrass}, but are distinct:  for example, $x^3-1$ here is not necessarily a powerful number. Nevertheless, the same proof strategy used in the previous section can be applied.

Recall the definitions of $g_-, g_+$ in the proof of the main theorem. We proceed in a similar manner.

\noindent\textbf{Case 1: $(g_{-},g_{+}) = (1,1)$.}
The factorization of the first equation  in \eqref{subcaseeqcor}  yields four possibilities:    (i) $ x-1 = 2t_1^3 , x^2+x+1 = p^2 t_2^3$, (ii) $ x-1 =2 p^2 t_1^3, x^2+x+1 = t_2^3 $, (iii) $ x-1 =  t_1^3 , x^2+x+1 = 2p^2 t_2^3$, or (iv) $ x-1 =  p^2 t_1^3, x^2+x+1 =2 t_2^3 $, and similarly, the second equation gives:    (a) $ x+1 = 4 t_3^3, x^2-x+1 = q^2 t_4^3$, (b) $  x+1 = 4 q^2 t_3^3, x^2-x+1 = t_4^3$, (c) $ x+1 =   t_3^3, x^2-x+1 = 4q^2 t_4^3$, (d) $  x+1 =  q^2 t_3^3, x^2-x+1 = 4 t_4^3$,
where $ t_i$ are  integers.

Since \(x^{2}\pm x+1=x(x\pm1)+1\) is always odd,  (iii), (iv), (c),   and  (d) are impossible. Among the remaining combinations, those involving \((\mathrm{ii})\) or \((\mathrm{b})\) can be  excluded by Lemma~\ref{cubic-mordell}. For example, under \((\mathrm{i})+(\mathrm{b}),\) we have \(x\) odd, and Lemma~\ref{cubic-mordell} forces \(x\in\{1,19\}\); \(x=1\) gives \(a=0\) (a contradiction), while \(x=19\) yields \(x+1=20\), so no prime \(q\) exists satisfying \eqref{subcaseeqcor}.

 Thus only \((\mathrm{i})+(\mathrm{a})\) remains. In this case,
$
2t_3^{3}-t_1^{3}=1.
$
By Lemma~\ref{lem:cube-2diff}, \(t_1=\pm1\), hence \(x\in\{3,-1\}\), but neither value produces a prime \(p\) satisfying \eqref{subcaseeqcor}.

\noindent\textbf{Case 2: $(g_{-},g_{+}) = (3,1)$.}
  If $p=3$,   Lemma \ref{lem:cube-4diff} implies $x=1$, violating  $a\ne 0$. Assume $p\ne 3$. The case condition requires $x\equiv 1 \pmod 3$,   and applying Lemma \ref{lem:corrv3} gives $v_3(x^2+x+1)=1$. Combining this with the parity argument from Case 1, it suffices to   analyze the   factorizations: (i) $x-1 = 18 t_1^3, x^2+x+1  = 3 p^2  t_2^3$,  or (ii)    $x-1 = 18 p^2 t_1^3,    x^2+x+1  = 3 t_2^3$, and (a) $x+1 =  4  t_3^3, x^2-x+1  = q^2  t_4^3$,  or (b) $x+1 =  4 q^2 t_3^3, x^2-x+1  =   t_4^3$.
By a similar line of reasoning, applying Lemma \ref{lem:modell} and Lemma \ref{cubic-mordell} eliminates all but one nontrivial combination,   (i) + (a),    from which it follows   that  $2 t_3^3 - 9 t_1^3 = 1 $. Reducing the equation modulo 9 gives  $2 t_3^3 \equiv  1\pmod 9$,    a contradiction.

\noindent\textbf{Case 3: $(g_-, g_+) = (1,3)$.} If $q=3$,   Lemma \ref{lem:cube-4diff}  implies $x=-1$, violating  $a\ne 0$. Assume $q\ne 3$. Applying Lemma \ref{lem:corrv3} gives $v_3(x^2-x+1)=1$. Combining   the 3-adic valuation and the parity argument as in Case 2, the possible factorizations are:   (i) $x-1 = 2 t_1^3, x^2+x+1  =   p^2  t_2^3$,  or (ii)    $x-1 = 2 p^2 t_1^3,    x^2+x+1  =   t_2^3$, and (a) $x+1 =  36   t_3^3, x^2-x+1  = 3q^2  t_4^3$,  or (b) $x+1 =  36 q^2 t_3^3, x^2-x+1  =   3t_4^3$.
Applying Lemma \ref{lem:modell} and Lemma \ref{cubic-mordell} leaves only one nontrivial case,
   $x-1 =2 t_1^3,  x^2+x+1 = p^2 t_2^3, x+1 = 36 t_3^3,x^2-x+1 =  3 q t_4^3$, implying  $18 t_3^3 -  t_1^3 = 1 $. Then Lemma \ref{lem:cube-4diff}   gives $t_1=-1$, and thus $x=-1$, violating $a\ne 0$.

The proof is complete.

\section{Conclusion}
We have proved that no three consecutive integers centered at a perfect cube can all be powerful under the structural constraints studied here. This result extends recent advances on the Erd\H{o}s-Mollin-Walsh  conjecture by eliminating a notable     family of potential counterexamples through modular and elliptic curve methods.

One natural extension is to examine whether similar non-existence holds when centered around higher powers or other special integers, and what further constraints might eliminate all consecutive powerful numbers entirely. A related and more fundamental question that arose during our research is the following: we conjecture that for every integer $x>1$ and every integer $n>2$, the number $x^n-1$ is not powerful. This is a stronger claim than   Mih{\u{a}}ilescu's Theorem (formerly Catalan's Conjecture) \citep{mihailescu2004primary}. A proof of this general assertion would have significant implications; the specific case for $n=3$ would  resolve the question addressed in Theorem \ref{thm:main} and provide deeper insight into the structure of powerful numbers.

\section*{Acknowledgment}
The author is grateful to  Tudor Popescu for bringing Chan's work to his attention and for proposing the key conjecture addressed  in this paper.
 In an earlier draft, the author established the result of the corollary with $2x$ in place of  $x$; the author is grateful to Dr. Tsz Ho Chan for his insightful suggestion to investigate removing the factor of $2$, which was accomplished in this revision.  The author also thanks the anonymous referee for independently suggesting this same valuable improvement, providing a helpful proof sketch, and offering many stylistic suggestions that improved the exposition of the paper. The conjecture stated in the final section was first raised by the author in the personal communication with Dr. Chan.
 
\bibliographystyle{apalike}
\bibliography{nt}

\end{document}